\documentclass[11pt]{article}

\usepackage{natbib}
\usepackage{amssymb}
\usepackage{amsmath,enumerate,rotate}

\usepackage[latin1]{inputenc}
\usepackage[english]{babel}
\usepackage{amssymb}
\usepackage{amsmath,enumerate,rotate}
\usepackage{amssymb,latexsym}
\usepackage{amsthm}
\usepackage{natbib}

\usepackage{color}
\usepackage{soul}

\theoremstyle{plain}

\newtheorem*{theorem*}{Theorem}

\theoremstyle{remark}

\newtheorem{example}{Example}

\newtheorem*{definition*}{Definition}

\newcommand\tsum{\textstyle\sum}
\newcommand\diff{\mathrm{d}}

\newcommand\ind{\mathbb{I}}

\newcommand\BR{\mathbb{R}}

\newcommand\la{\lambda}

\usepackage{amssymb}
\usepackage{amsmath}
\usepackage{latexsym}
\usepackage[mathcal]{eucal}
\usepackage{mathrsfs} 
\usepackage{verbatim}
\usepackage{url}




\title{A definition of conditional probability distribution with non--stochastic information}




\author{Pier Giovanni Bissiri \footnote{Pier Giovanni Bissiri is Postdoctoral reasercher, 
Dipartimento di Statistica, Università degli Studi di Milano--Bicocca,
Edificio U7, Via Bicocca degli Arcimboldi 8, Milano 20126, Italy (e-mail: pier.bissiri@unimib.it).}$\;\;$
and Stephen G. Walker \footnote{Stephen G. Walker is Professor, SMSAS, University of Kent, Canterbury, Kent, CT2 7NZ, UK (e-mail: S.G.Walker@kent.ac.uk).}}



\begin{document}

\maketitle

\begin{abstract}
The current definition of a conditional probability distribution enables one
to update probabilities only on the basis of stochastic information.
This paper provides 
a definition for conditional probability distributions with non--stochastic information.  
The definition is derived as a solution of a decision theoretic problem,  
where the information is connected to the outcome of interest via a loss function. 
We shall show that the Kullback--Leibler divergence plays a central role.  
Some illustrations are presented.
\end{abstract}

\thanks{
{\it Keywords:} Conditional probability distribution, conditional probability density, loss function, Kullback--Leibler divergence, g-divergence.\\ 
{\it 2010 Mathematics Subject Classification:} 03B48, 60A99; secondary: 62C99.}



\section{Introduction}\label{s: intr}
The theory of conditional probability distributions
is a well-established mathematical theory that
provides a procedure to update probabilities taking into account new information.
To motivate the new work in this paper, we mention that such a procedure is available only if the information which is used to update the probability concerns stochastic events; that is, events to which a probability is assigned.
In other words, such information needs to be already included into the probability model. 

\subsection{Notation}
Before proceeding, we introduce the notation. 
Let $Y$ be a random variable on a probability space $(\Omega, \mathscr{F},\mathbb{P})$, which will be the outcome of interest,  
and valued into a measurable space $(\mathbb{Y},\,\mathscr{Y})$ with probability distribution $P$. Hence, $P$ represents initial belief about the outcome concerning $Y$.  
By $I$ we shall denote the information obtained about $Y$.  
If $I$ is stochastic, then we shall represent it by a random variable $X$ 
from $(\Omega, \mathscr{F},\mathbb{P})$ into $(\mathbb{X},\,\mathscr{X})$ with probability 
distribution $Q$ and $I$ will be assumed to be an outcome of $X$. 
We will denote by $P_I$ the updated $P$ given information $I$.

We will let $D$ denote the  Kullback-Leibler divergence (relative entropy), i.e.
\[D(Q_1,Q_2) = \int \log\bigg(\frac{\diff Q_1}{\diff Q_2}\bigg)\ \diff Q_1,\]
for any couple $(Q_1,Q_2)$ of probability measures such that $Q_1\ll Q_2$. 
More generally we define the $g$-divergence:
\[D_g(Q_1,Q_2) = \int g\bigg(\frac{\diff Q_1}{\diff Q_2}\bigg)\ \diff Q_2,\]
for any couple $(Q_1,Q_2)$ of probability measures such that $Q_1\ll Q_2$,
where $g$ is a convex function from
$(0,\infty)$ into $\BR$ such that $g(1)=0$. This class of probability discrepancies has been introduced and studied independently by \cite{AliSilvey} and \cite{Csiszar}. 
 The Kullback--Leibler divergence is a particular case, which can be obtained taking $g(x)=x\log(x)$. 

\subsection{Mathematical framework}\label{s: math_framework}
When the standard definition of conditional probability does not apply, 
for reasons we discuss later, we present an alternative definition based on a mathematical 
decision theoretic framework. 
When information received is non--stochastic, but relevant 
to an outcome of interest, 
we cannot use a probability distribution and so we 
need an alternative way to connect the information $I$ with outcome of 
interest $Y$. 
We do so using loss functions.
 
The purpose of this paper is to provide a definition of a conditional distribution of $Y$ on the basis of $I$, which we shall denote by $P_I$. 
We take the pair $(I, P)$ to $P_I$ as the solution to a decision problem based on the 
minimization of a cumulative loss function. 
This loss function will be defined on the class of
probability measures on $\mathscr{Y}$ that are absolutely
continuous with respect to $P$, call this $\mathscr{P}$. 
Indeed, the conditional probability should be zero on every
event whose unconditional probability is zero.  
Here, $\lambda\in\mathscr{P}$ will denote the action and the best choice, i.e. 
minimizing the loss function, 
will be defined as the conditional probability distribution for $Y$ given $I$. 
In order to properly assess the loss function, 
it will be expressed as the sum (cumulative loss) of two terms, i.e. 
\begin{equation}\label{f: loss_function} 
L(\la) = H_I(\la)+l(\lambda,P),
\end{equation}
where $l(\la, P)$ is a discrepancy between the probability measure $\la$ and $P$ and
$H_I(\la)$ is the component of the loss that takes into account the information relating to $I$.
In fact, we will show that $l(\la, P)$ should be the Kullback--Leibler divergence 
for coherence purposes. 
So, $P_I$ will be defined as that $\la$ which minimizes $L(\la)$.

\subsection{Relation to the literature}
In the literature, 
definitions of conditional probability, such as the Jeffrey's Rule of conditioning, 
are given where new information is not put in terms of the occurrence of an event included in the model.   
These definitions rely on the assumption that the information can be given 
in the form of a constraint (or a combination of constraints) 
on the probability.  
Constraints considered are of the type
\begin{equation}\label{f: constraint}
\int_{\mathbb{Y}} g(y) \la(\diff y) >0,
\end{equation} 
where $g$ is a measurable real function on $\mathbb{Y}$    
and the strict inequality is sometimes replaced by a not strict one. 
The idea is to minimize $D(\la,P)$ subject to the constraint 
\eqref{f: constraint}, which represents information $I$.
This problem can be solved, i.e. $P_I$ can be 
obtained, by minimizing $D(\la,P)$ subject to the constraint 
using Lagrange multipliers.

Such a procedure of condizionalization is a specific case in our approach. 
In fact, it is equivalent to minimize the loss function \eqref{f: loss_function} taking 
$l$ equal to the Kullback--Leibler divergence and 
\[
H_I(\la)=
\begin{cases} 
0 &\text{if}   \int_{\mathbb{Y}} g(y) \la(\diff y) >0,\\
+ \infty &\text{if}  \int_{\mathbb{Y}} g(y) \la(\diff y) \leq 0.
\end{cases}
\]

For more details about conditionalization based upon constraints on the 
conditional distribution, see 
\cite{frassen},  
\cite{skyrms}, \cite{domotor}, 
\cite{diaconis} and \cite{shore}. 
Our approach is different as we 
encompass potentially arbitrary information 
about $Y$, so as long as it is possible to construct a loss function $h_I(y)$ 
for each $Y$ given $I$. 

\subsection{Motivation}
The random variable $Y$ represents an unknown quantity to which a probability distribution has been assigned and needs to be updated on the basis of new information $I$.
If $I$ coincides with an outcome of another random variable $X$,
then it is possible to update the unconditional distribution of $Y$ to the probability distribution of $Y$ given $X$.
However, to do this, it is required to know
all the possible alternatives of $I$, that is, all the outcomes of $X$.
Moreover, it is required to assess the joint distribution of $X$ and $Y$ or the conditional distribution of $X$ given $Y$. This is quite easy if, for instance, $I$ is  known to be an outcome of some well-defined random experiment.
In many situations, one has seen the outcome $X$ and in order to establish an update of the distribution of $Y$, one needs to retrospectively ponder and imagine a joint probability model.

This difficulty arises in different puzzles such as, for instance,
Freund's puzzle of the two aces, introduced by \cite{Freund65}.
For other puzzles about conditional probabilities, see, for instance, \citet{Gardner59}.

These puzzles have been widely used to discuss the concept of conditional probability.
\cite{Hutchison99, Hutchison08}
emphasizes that the updating process needs to take into account
the circumstances under which the truth of $I$ was conveyed.
Also, \cite{Bar-Hillel82} claim that to know how the knowledge was obtained is
``a crucial ingredient to select the appropriate model".
These scholars present different views about the concept of conditionalization, but
all agree on the fact that there would not be a problem if it was known how the information $I$ became available, and therefore one could build a model including $I$.

The concept of conditional probability distributions
is certainly appropriate as a procedure to update probabilities on the basis of any new information
that was already included in the probability model.
But it can be difficult to construct a model that considers all possible relevant information that in the future could become available.
Therefore, the problem arises when one obtains some new and possibly unexpected information
and wants to use it to update a probability distribution.
Indeed, it does not seem appropriate to assess the probability of something which has been already observed.
Our basic assumption is that the information $I$ can be connected to the outcome of interest 
via a loss function $H_I$ defined on the set of all possible outcomes of $Y$. 
The conditional distribution of $Y$ given $I$ 
will be defined as the one that minimizes 
a cumulative loss in the form given by \eqref{f: loss_function}. 
In this way, it is possible to update the distribution of $Y$, 
even if $I$ is some new unexpected information, which was not included in the probabilistic framework. 
It will be shown that if instead $I$ is the outcome of a random variable $X$ and there is a joint density $f$ for $(X, Y)$, then one can recover as particular case the conditional distribution of $Y$ given $X$. To do this, $l$ is taken to be the Kullback--Leibler divergence. 
It will be proved that in general it is necessary for the updating procedure to be coherent that $l$ is the Kullback--Leibler divergence. 


\subsection{Description of the paper}
Section \ref{s: main} contains the main results. 
In Section \ref{s: example}, some examples will be considered. One such is as follows:
assume that $Y$ is a scalar quantity and
one learns that $Y$ is close to zero. An answer will be given to this question:
how could one update the distribution of $Y$ \emph{after} learning such information? 
Section \ref{s: discussion} contains a discussion. 

\section{Defining conditional probability distributions with non--stochastic information}
\label{s: main}
This section reports the current definition of conditional probability distribution 
and presents and motivates our definition for conditional probability distribution with non--stochastic information.

\subsection{The current definition}
In probability theory,
a conditional distribution of $Y$ given $X$ is a map $p$ 
from $\mathscr{Y}\times \mathbb{X}$ into $\BR$ such that:
\begin{itemize}
\item for each $x$ in $\mathbb{X}$, $p(\cdot,x)$ is a probability measure on $\mathscr{Y}$,
\item for each  $B$ in $\mathscr{Y}$, $p(B,X(\omega))$ is a version of the conditional probability
$\mathbb{P}(Y\in B\mid X(\omega))$, i.e.
for each $A$ in $\mathscr{X}$ and each $B$ in $\mathscr{Y}$,
\begin{equation}\label{f: def.}
\mathbb{P}\{X\in A,\, Y\in B\} = \int_A p(B,x)\, \diff Q(x),\end{equation}
where $Q$ denotes the probability distribution of $X$.
\end{itemize}
The conditional distribution is known to be essentially unique, i.e. unique only up to a.s. equality.
This is a consequence of $X$ being stochastic. In fact, as \citet[page 160]{Feller} points out, if, for instance, the distribution of $X$ is concentrated on a subset $\mathbb{X}_0$ of $\mathbb{X}$, no natural definition of $p(B,x)$ is possible for $x$ outside $\mathbb{X}_0$. Nevertheless, in individual cases, there usually exists a natural
choice dictated by regularity requirements.

Moreover, it is well known that
conditional distributions do not always exist unless some conditions are satisfied by the spaces $(\mathbb{X}, \mathscr{X})$ and $(\mathbb{Y}, \mathscr{Y})$. For more information about conditional probability distributions, see, for instance, \cite{Feller} or \cite{Billingsley}.

This paper will consider the case in which
 there are two $\sigma$-finite measures $\mu$ and $\nu$ on $\mathscr{F}$ such that
the probability distribution of $(X,\,Y)$ is absolutely continuous with respect to $\mu\times \nu$.
Denote its density by $f$.
This is a general framework which includes most applications and enables to find easily
an expression for the conditional distributions.
Generally, $\mathbb{X}$ and
$\mathbb{Y}$ are subsets of $\BR^k$, for some $k$, and $\mu$ and $\nu$ are the corresponding
Lebesgue measure.

If $f$ is  the density of the probability distribution of $(X,\,Y)$ with respect to $\mu\times\nu$, then
one can take
\begin{equation}\label{f: prob_cond}
p(B,x)=\frac{\int_B f(x,y)\ \nu(\diff y)}{\int_\mathbb{Y} f(x,y)\ \nu(\diff y)},
\end{equation}
for every $B$ in $\mathbb{Y}$
and every $x$ in $\mathbb{X}$ such that
\begin{equation}\label{f: marginalx}
0\,<\,f_X(x):=\int_\mathbb{Y} f(x,y)\ \nu(\diff y)\,<\,\infty.\end{equation}
Note that $p(\cdot,x)$
is absolutely continuous w.r.t. $\nu$ and its density is
\begin{equation}\label{f: cond_density}
f_{Y|X}(y|x):= f(x,y)/f_X(x),
\end{equation}
for every $x$ in $\mathbb{X}$ satisfying \eqref{f: marginalx}.
The density \eqref{f: cond_density}, which is called the conditional density of $Y$ given $X$,
is what is used in most application to find an expression for the conditional distribution.
Therefore,
\eqref{f: prob_cond} deserves to be considered as the ``practical definition" of
conditional probability distribution.
Indeed, it is the natural version of the conditional distribution of $Y$ given $X$ whenever a joint density $f$ exists for $X$ and $Y$.

\subsection{The loss function}
Given it is not always possible to relate new information $I$ to $Y$ through probability models,
instead, we will rely on the use of loss functions to ``connect" the information $I$ to $Y$. We will deal with the theory first, and then present some examples.

Before proceeding, let us recall that
$q(B,\cdot)$ 
satisfying \eqref{f: def.}
can be seen as the solution of a minimization problem whenever $Y$ is in $L^2(\Omega, \mathscr{F},P)$, by resorting to the theory of Hilbert spaces \citep[see, for instance,][]{JacodProtter}.
Clearly, this approach relies on the joint distribution of $X$ and $Y$ and therefore is not available when $X$ is replaced by some non--stochastic information $I$.

So, our aim is to define a conditional probability distribution as a solution of a decision problem with a fully motivated loss function; connecting the action, i.e. the conditional distribution, with current and given pieces of information: namely
the probability distribution $P$ of $Y$ and $I$, respectively. 

The form of the loss function we consider is \eqref{f: loss_function}. 
In particular, $H_I(\la)$ will be taken in the integral form i.e. the average or expected loss
\begin{equation*}
H_I(\la) = \int_{\mathbb{Y}} h_I(y)\; \lambda(\diff y),
\end{equation*}
where $h_I(\cdot, P)$ is a loss function defined on $\mathbb{Y}$. It is more reasonable to assess the loss relating to $Y$ and therefore it is reasonable to be able to construct 
$h_I(y)$. 
Examples will be considered later. If $\lambda$ then represents beliefs about $Y$, it is appropriate to consider the expected loss here.
Therefore, to define
conditional distributions,
a cumulative loss will be used of the following form:
\begin{equation}\label{f: gen.loss}
\int_{\mathbb{Y}}
h_I(y)
\ \la(\diff y)\ +\, \ l(\la, P).
\end{equation}
This general cumulative loss then represents or assesses the loss to the decision maker if they select probability measure $\lambda$ in the presence of information $I$ and $P$. 

\subsection{Stochastic information}
Let us see how this works when indeed $I$ is equivalent to a random variable $X$
and there is a joint density $f$ for $(X, Y)$.
In this setting, the conditional distribution \eqref{f: prob_cond}
 arises as the solution
of a decision theoretic problem. To see this, for every $x$ in $\mathbb{X}$
satisfying \eqref{f: marginalx},
define the following loss function $\bar{L}_x$:
\begin{equation}\label{f: loss1}\bar{L}_x(\la)\,:= \,
-\int_{S} \log (f(x,y)/f_Y(y))\ \la(\diff y)\ +\, \ D(\la, P),
\end{equation}
where
\begin{equation*}
f_Y(y):=\int_{\mathbb{X}} f(x,y) \ \mu(\diff x),
\end{equation*}
 $S$ is the set of all $y$ in $S$ such that $0<f_Y(y)<\infty$, 
$P$ is the probability distribution of $Y$,
$\la$ is a probability measure on $\mathbb{Y}$ absolutely continuous
w.r.t. $P$, and $D$ 
The loss \eqref{f: loss1} is of the form \eqref{f: gen.loss} with
$$l(\lambda,P)=D(\la, P),$$
and
\begin{equation}\label{f: self-information-loss}\begin{split}
h_I(y)=h(y, x):&=-\ind_S(y)\log (f(x,y)/f_Y(y))\\&=-\ind_S(y)\log f_{X| Y}(x|y),
\end{split}\end{equation}
where $\ind_S(y)$ is equal to $1$ or $0$ depending on whether $y$ belongs to $S$ or not.

For every $x$ in $\mathbb{X}$ satisfying \eqref{f: marginalx},
the conditional distribution $p(\cdot,x)$ given by \eqref{f: prob_cond} 
minimizes the loss $\bar{L}_{x}$, since
\[\bar{L}_x(\la) =
D(\la, p(\cdot,x)) - \log\bigg(\int_{\mathbb{Y}}\, f(x,y)\,  \nu(\diff y)\bigg).\]

In the loss \eqref{f: loss1}, the first addendum
depends on the joint density function of $X$ and $Y$ and therefore,
to be able to define such loss,
$X$ needs to be stochastic. In other words, a probability distribution has to be assigned to $X$.

The loss \eqref{f: self-information-loss} 
is known as the self--information loss function and the most commonly used when $x$ has come from a specified family of densities. 
So, $H_I$ 
 turns out to be the the expected or average loss, using the self--information loss function $-\log f_{X|Y}(x|y)$.

\subsection{Non--stochastic information}
If the random variable $X$ is replaced by some non--stochastic information $I$,
then the self--information loss \eqref{f: self-information-loss} cannot be defined, but
one can still resort to a loss function of the form \eqref{f: gen.loss}, assessing $h_I(y)$ in a
different way.
As usual, $h_I(y)$ evaluates the additional loss in outcome $y$ 
due to the acquirement of $I$.
Some examples for this will be considered later. 

In the loss \eqref{f: loss1}, the Kullback--Leibler divergence from the marginal of $Y$ can also be replaced by a more general discrepancy,
such as the $g$-divergence. 
This leads us to consider a more general loss function than \eqref{f: loss1} as follows:
\begin{equation}\label{f: loss}
\int_{\mathbb{Y}}
h_I(y)
\ \la(\diff y)\ +\, \ D_g(\la, P),
\end{equation}
where $h_I$ is assessed after learning
$I$,
information which does not need to be stochastic.
As the loss \eqref{f: loss1}, the loss \eqref{f: loss} is defined on the class of
probability measures on $\mathscr{Y}$ that are absolutely
continuous with respect to $P$, which is reasonable. 
Assume there is a unique probability measure that minimizes \eqref{f: loss}
in the class of probability measures on $\mathscr{Y}$ absolutely
continuous with respect to $P$. 
Then, it will be called 
the conditional distribution of $Y$ given the information $I$  
(according to the discrepancy $D_g$ and the loss $h_I$)   
and it will be denoted by $P_I$. 



At this stage, assume that another piece $J$ of information 
is available in addition to $I$ 
and that $I$ and $J$ are not overlapping pieces of information. 
This happens, for instance, in the stochastic case when $I$ and $J$ are outcomes of two independent random variables. 
We shall write $I J$ (or equivalently $J I$)
to denote the information obtained combining $I$ with $J$. 
Being $I$ and $J$ not overlapping, we choose $h_I$, $h_J$ and $h_{IJ}$ satisfying the following additivity property:
\begin{equation}\label{f: h}
 h_{IJ}(y) =h_I(y)+h_J(y). 
  \end{equation}

Clearly, updating the distribution $P$ on the basis of $I$ and $J$ and updating
the conditional distribution $P_I$ on the basis of $J$ only, should yield the
same probability distribution for $Y$. 
In the first case, the updated probability distribution is obtained by minimizing the loss:
\begin{equation}\label{f: loss_1}
\int_{\mathbb{Y}}
h_{IJ}(y)
\ \la(\diff y)\ +\, \ D_g(\la, P).
\end{equation}
 In the second one, the loss to minimize is: 
 \begin{equation}\label{f: loss_2}
\int_{\mathbb{Y}}
h_{J}(y) 
\ \la(\diff y)\ +\, \ D_g(\la, P_I).
\end{equation}
The two losses \eqref{f: loss_1} and \eqref{f: loss_2} should yield the same updated probability distribution for $Y$.

For this coherence condition to be in force, it is necessary 
that the discrepancy $D_g$ is the Kullback-Leibler divergence.
To be more precise, the following theorem can be stated:

\begin{theorem*}
Let $\bar{P}:=P_I$, and 
assume that \eqref{f: h} holds and 
\begin{equation}\label{f: coherence}
P_{I J} = \bar{P}_J,\end{equation}
for every probability measure $P$ on $\mathscr{Y}$ and for every choice of the loss functions 
$h_{I}$ and $h_{J}$ 
such that $P_I$, 
$P_{IJ}$ and $\bar{P}_J$ are all properly defined.

Then $D_g$ is the Kullback-Leibler divergence.
\end{theorem*}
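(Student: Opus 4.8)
The plan is to convert the minimization defining each conditional distribution into a pointwise optimality condition, combine the three such conditions imposed by the coherence requirement \eqref{f: coherence} into a single functional equation for $g'$, and then solve it. First I would characterize the minimizer of \eqref{f: loss}. Writing $w=\diff\la/\diff P$ for the density of $\la$ with respect to $P$, the loss \eqref{f: loss} becomes $\int_{\mathbb{Y}}\bigl(h_I(y)\,w(y)+g(w(y))\bigr)\,P(\diff y)$, a convex functional of $w$ to be minimized subject to $w\ge 0$ and $\int w\,\diff P=1$. Introducing a Lagrange multiplier for the normalization and minimizing pointwise (strict convexity of $g$, hence invertibility of $g'$, is what makes $P_I$ ``properly defined''), one obtains
\begin{equation}\label{f: EL}
g'\!\left(\frac{\diff P_I}{\diff P}\right)=c_I-h_I\qquad P\text{-a.e.},
\end{equation}
where $c_I$ is fixed by $\int(\diff P_I/\diff P)\,\diff P=1$. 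Reading \eqref{f: EL} backwards, prescribing a posterior density is the same as prescribing $h_I$ up to an additive constant, so by varying $h_I$ the ratio $\diff P_I/\diff P$ may be made equal to any positive function integrating to one against $P$.

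Next I would write \eqref{f: EL} for each distribution in \eqref{f: coherence}. With $\bar P=P_I$ and the additivity \eqref{f: h}, the updates producing $P_{IJ}$ from $P$ and $\bar P_J$ from $P_I$ satisfy
\[
g'\!\left(\frac{\diff P_{IJ}}{\diff P}\right)=c_{IJ}-h_I-h_J,\qquad
g'\!\left(\frac{\diff \bar P_J}{\diff P_I}\right)=c_J-h_J.
\]
Eliminating $h_I,h_J$ via \eqref{f: EL}, using the chain rule $\diff\bar P_J/\diff P=(\diff\bar P_J/\diff P_I)(\diff P_I/\diff P)$, and invoking $P_{IJ}=\bar P_J$ (so that $\diff\bar P_J/\diff P_I=(\diff P_{IJ}/\diff P)/(\diff P_I/\diff P)$), every loss term cancels and I am left with
\begin{equation}\label{f: funct}
g'\!\left(\frac{s}{t}\right)=g'(s)-g'(t)+K \qquad P\text{-a.e.},
\end{equation}
where $s=\diff P_{IJ}/\diff P$, $t=\diff P_I/\diff P$, and $K$ is a constant depending only on the chosen $P,h_I,h_J$.

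Finally I would solve \eqref{f: funct}. Since the hypotheses range over every $P$ and every admissible pair $(h_I,h_J)$, and since by the remark after \eqref{f: EL} the densities $s$ and $t$ can be prescribed independently, I would take a three-atom $P$ with densities that agree at one atom---forcing $s/t=1$ and hence $K=g'(1)$ there---while assuming arbitrary positive values $s,t$ at another atom of positive mass; letting the remaining atomic masses shrink then yields \eqref{f: funct} with $K=g'(1)$ for all $s,t>0$. Putting $F:=g'-g'(1)$ turns this into the logarithmic Cauchy equation $F(s/t)=F(s)-F(t)$, whose only monotone solutions---and $F$ is monotone because $g$ is convex---are $F(x)=\al\log x$; thus $g'(x)=\al\log x+g'(1)$, and integrating under $g(1)=0$ gives $g(x)=\al\,x\log x+b\,(x-1)$ with $\al>0$ by convexity. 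As the term $b(x-1)$ integrates to zero against any density and so leaves $D_g$ unchanged, $D_g$ equals a positive multiple of the Kullback--Leibler divergence $D$, as claimed. The step I expect to be most delicate is the pointwise condition \eqref{f: EL} together with the independent realizability of $s$ and $t$: one must ensure the Lagrange multiplier exists and the minimizer is interior and unique (this is exactly where the regularity and strict convexity of $g$ hidden in ``properly defined'' are needed), and one must exercise the $P$-a.e. identity \eqref{f: funct} on sets of positive measure so that genuinely arbitrary pairs $(s,t)$ are attained.
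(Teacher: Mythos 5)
Your argument is sound in substance and reaches the paper's conclusion ($D_g=kD$ for some $k>0$) by the same overall strategy---first-order conditions for the three minimizations, additivity \eqref{f: h} plus coherence \eqref{f: coherence} to cancel the loss terms, a Cauchy-type functional equation for $g'$, and the observation that the affine part of $g$ contributes nothing to $D_g$---but the execution genuinely differs at two points. The paper avoids Lagrange multipliers altogether by restricting to a two-atom $P$, where $\la$ is parametrized by a single scalar $p$ and stationarity yields \eqref{f: loss.1}--\eqref{f: loss.3} directly; the price is that the resulting identity \eqref{f: equation} contains the ``conjugate'' terms such as $g'\bigl((1-txy)/(1-t)\bigr)$, which the paper eliminates by letting $t=p_0\to 0$ and invoking continuity of $g'$. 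You instead keep the multipliers $c_I,c_{IJ},c_J$ on a general space and dispose of the unknown constant $K=c_J-c_{IJ}+c_I$ by a three-atom construction in which the two densities agree at one atom, forcing $K=g'(1)$; this is a clean alternative to the paper's limiting argument, and solving the Cauchy equation via monotonicity of $g'$ rather than continuity is equally standard. Both proofs share the same tacit realizability assumptions: that the minimizers are interior (the paper quotes Lemma~2 of \citet{BissiriWalker09} for $0<p_1,p_2<1$; you correctly flag this as the delicate step) and that prescribed densities can be realized by admissible $h_I,h_J$ with unique minimizers, so on these points your proposal is no less rigorous than the paper.

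One justification is wrong, though easily repaired: $\al>0$ does not follow ``by convexity.'' Convexity of $g$ only gives $\al\ge 0$, and the degenerate case $\al=0$ (i.e.\ $g'$ constant, $g$ affine, $D_g\equiv 0$) is perfectly consistent with convexity; it must be excluded by the hypotheses instead, as the paper does: if $g'$ is constant, then for any $h_I$ with $h_I(y_0)\neq h_I(y_1)$ the stationarity condition \eqref{f: loss.1} has no solution, so an interior unique minimizer $P_I$ either fails to exist or fails to be unique, contradicting the assumption that coherence is tested on properly defined updates. With that sentence added, your proof is complete.
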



\begin{proof}
This result is proven from a different starting point in \citet[Theorem 2.5]{BissiriWalker09}.
Here, a shorter proof is given 
by assuming the differentiability of $g$.

Assume that $\mathbb{Y}$ contains at least two distinct points, say $y_0$ and $y_1$. Otherwise, $P$ is degenerate and the thesis is trivially satisfied.

To prove this theorem, it is sufficient to consider a very specific choice for $P$, taking
$P=p_0\delta_{y_0}+ (1-p_0)\delta_{y_1},$
where $0< p_0< 1$. 
Any probability measure $\lambda \ll P$ has to be equal to
$p\delta_{y_0}+(1-p)\delta_{y_1}$, for some $0\leq p\leq 1$. 
Therefore, in this specific situation, 
the loss \eqref{f: loss} becomes:
\begin{equation*}
\begin{split}
l(p,p_0,h_I)&:=p\,h_I(y_0)\, +\,(1-p)\,h_I(y_1)\\
&\phantom{=}+\, p_0\,g\left(\frac{p}{p_0}\right)\,+\,(1-p_0)\,g\left(\frac{1-p}{1-p_0}\right).
\end{split}
\end{equation*}
Denote by $p_1$ the probability $P_I(\{y_0\})$, i.e. the minimum point of $l(p,p_0,h_I)$ 
as a function of $p$, and by $p_2$ the probability $P_{IJ}(\{y_0\})$. 
By hypotheses, $p_2$ is the unique minimum point of both loss functions
$l(p,p_1,h_{J})$ and $l(p,p_0,h_{I J})$.  
Again by hypothesis, we shall consider only those functions 
$h_I$ and $h_J$
 such that each one of the functions 
$l(p,p_0,h_I)$, $l(p,p_1,h_J)$, and $l(p,p_0,h_{IJ})$, 
as a function of $p$, has a unique minimum point, which is $p_1$ for the first one and $p_2$ 
for the second and third one.
The values $p_1$ and $p_2$ have to be strictly bigger than zero and strictly smaller than one: this was proved by \citet*[Lemma~2]{BissiriWalker09}.
Hence, $p_1$ has to be a stationary point of 
$l(p,p_0,h_{I})$
and $p_2$ of both the functions 
 $l(p,p_1,h_{J})$ and $l(p,p_0,h_{I J})$. 
Therefore, 
\begin{align}\label{f: loss.1}
g'\left(\frac{p_1}{p_0}\right)\,-\,g'\left(\frac{1-p_1}{1-p_0}\right)\,&=\,
h_{I}(y_1)\, -\, h_{I}(y_0),\\
\label{f: loss.2}
\, g'\left(\frac{p_2}{p_0}\right)\,-\,g'\left(\frac{1-p_2}{1-p_0}\right)\,&=\,
h_{I J}(y_1)\, -\, h_{I J}(y_0),\\
\label{f: loss.3}
\, g'\left(\frac{p_2}{p_1}\right)\,-\,g'\left(\frac{1-p_2}{1-p_1}\right)\,&=\,
h_{J}(y_1)\, -\, h_{J}(y_0).
\end{align}
Recall that 
$h_{I J}=h_{J}+h_{I}$ by \eqref{f: h}.
Therefore,
summing up term by term \eqref{f: loss.1} and \eqref{f: loss.3},
and considering \eqref{f: loss.2}, one obtains:
\begin{equation}\label{f: equation}\begin{split}
g'&\left(\frac{p_2}{p_0}\right)\,-\,g'\left(\frac{1-p_2}{1-p_0}\right)\\
&\phantom{XXX}=\,g'\left(\frac{p_1}{p_0}\right)\,-\,g'\left(\frac{1-p_1}{1-p_0}\right)\, +\,
g'\left(\frac{p_2}{p_1}\right)\,-\,g'\left(\frac{1-p_2}{1-p_1}\right).
\end{split}\end{equation}

Recall that by hypothesis \eqref{f: loss.1}--\eqref{f: loss.3} need to hold for every two functions 
$h_I$ and $h_J$
arbitrarily chosen with the only requirement that $p_1$ and $p_2$ uniquely exist. 
Hence, \eqref{f: equation} needs to hold for every $(p_0,p_1,p_2)$ in $(0,1)^3$.
By substituting $t=p_0$, $x=p_1/p_0$ and $y=p_2/p_1$, \eqref{f: equation} becomes
\begin{equation}\label{f: equation.}\begin{split}
g'&\left(xy\right)\,-\,g'\left(\frac{1-txy}{1-t}\right)\\
&\phantom{XXX}=\,g'(x)\,-\,g'\left(\frac{1-tx}{1-t}\right)\, +\,
g'\left(y\right)\,-\,g'\left(\frac{1-txy}{1-tx}\right),
\end{split}\end{equation}
which holds for every $0<t<1$, and every $x,y>0$ such that $x<1/t$ and $y<1/(xt)$.
Being $g$ convex and differentiable, its derivative $g'$ is continuous. Therefore,
letting $t$ go to zero, \eqref{f: equation.}  implies that
\begin{equation}\label{f: equation+}
g'\left(xy\right)
=\,g'(x)\, +\,
g'\left(y\right)\,-\,g'(1)
\end{equation}
holds true for every $x,y>0$. Define the function
$\varphi(\cdot)=g'(\cdot) -g'(1)$.
This function is continuous, being $g'$ such,
and by \eqref{f: equation+},
$\varphi(xy)=\varphi(x)\, +\,\varphi(y)$ holds for every $x,y>0$. Hence, $\varphi(\cdot)$ is $k\ln(\cdot)$ for some $k$, and therefore
\begin{equation}\label{f: log}
 g'(x)\
=\ k\,\ln (x)\ +\ g'(1),\end{equation}
where
\(k\,=\,(g'(2)\,   -\, g'(1))/\ln(2)\).
Being $g$ convex, $g'$ is not decreasing and therefore
$k \geq 0$. If $k=0$, then $g'$ is constant, which is impossible, otherwise, for any $h_I$,
$p_1$ satisfying \eqref{f: loss.1} either would not exist or would not be unique. Therefore, $k$ must be positive.
Being $g(1)=0$ by assumption, \eqref{f: log} implies that
$g(x)\:=\:k\, x\ln(x)\, +\, (g'(1)-k) (x-1)$.
Hence,
\[    D_g(Q_1,Q_2)=
 k\int \ln \bigg(\frac{\diff Q_1}{\diff Q_2}\bigg)\ \diff Q_1 \]
holds true for some $k>0$ and
for every couple of measures $(Q_1, Q_2)$ such that $Q_1\ll Q_2$.

\end{proof}

In virtue of this theorem, the conditional distribution of $Y$ given the information $I$ is coherent
 only if it minimizes the loss
\begin{equation}\label{f: loss:}
\bar{L}(\la)\,:= \,
\int_{\mathbb{Y}}
h_I(y)
\ \la(\diff y)\ +\,  k\,\int\ln\left(\frac{\diff\la}{\diff P}\right)\diff \la,
\end{equation}
where $k$ is some positive constant.
To define the loss \eqref{f: loss:}, one needs to assess 
$h_I$
 and $k$. Notice that
a probability distribution that minimizes $\bar{L}(\la)$ , or equivalently $\bar{L}(\la)/k$,
 is uniquely identified by 
 $h_{I}/k$. 
In other words, assessing 
$h_{I}=h_0$
and $k=k_0$ is equivalent to
assess $h_{I}=h_0/k_0$ and $k=1$. 
For this reason, from now on, it will be convenient to fix $k=1$.

In what follows, only coherent conditional distributions will be considered. Therefore,
$D_g$ will always be assessed to be the
Kullback--Leibler divergence.
Whenever a probability measure that minimizes
\eqref{f: loss:} (with $k=1$) exists and is unique, it will be called the conditional probability distribution of $Y$ given $I$ and will be denoted by $P_I$.

If
\begin{equation}\label{f: cond_int}
\int_{\mathbb{Y}} 
e^{-h_I(y)}
\, P(\diff y)<\infty,
\end{equation}
then
$P_I$ is properly defined and is equal to
\begin{equation}\label{f: Result}
P_I(A)=
\frac{\int_A 
e^{-\,h_I(y)}  
P(\diff y)}
{\int_{\mathbb{Y}} 
e^{-\,h_I(u)}
\, P(\diff u)},
\end{equation}
for every measurable subset $A$ of $\mathbb{Y}$.
In fact,
\[\bar{L}(\la) =
D(\la, P_I)
- \ln\bigg(\int_{\mathbb{Y}}\,
e^{-h_I(y)}
\,  P(\diff y)\bigg)\]
holds true for every probability measure $\la$ on $\mathscr{Y}$ such that $\la \ll P$.

By \eqref{f: Result}, it is clear that 
the choice of the Kullback--Leibler divergence for $D_g$ and of a loss 
$h_I$ 
satisfying \eqref{f: cond_int} is sufficient 
for the coherence condition \eqref{f: coherence}. Moreover, notice that $P_I$ is defined to be a unique probability measure, not just essentially unique.

\section{Illustrations}\label{s: example}
The loss function  $h_I$ 
is chosen by the decision-maker on the basis of the available information. 
Such information sometimes happens to be stochastic, i.e to belong to a set of outcomes to which a probability is assigned. If this is the case, one should update the probability distribution of $Y$ by means of the usual conditional distribution.
Whenever 
there is a joint density $f$ for $X$ and $Y$,
this is tantamount to use
the self--information loss function $h_I(y,x)=-\ln f_{X|Y}(x|y)$. If 
 the available information is not stochastic, then
 one can resort to the approach described in the present paper, properly assessing the loss function $h_I$. 
To see a practical and simple example, consider the situation mentioned in the Introduction:


\begin{example}
$Y$ is a scalar quantity and the information $I$ is that $Y$ is close to zero. In this case,
it is natural to assess:
\[h_I(y)\,=\,w\,y^2,\]
where $w$ is some positive constant, and the conditional distribution of $Y$ given $I$ is
\[
P_I(A)
 = \frac{\int_A e^{-w\,y^2}\,P(\diff y)}{
\int_\mathbb{Y} e^{-w\,y^2}\,P(\diff y)}.
\]
\end{example}
\begin{example}
While for the second example everyone would know how to deal with, there is currently no formal mathematical mechanism for pursuing a conditional update. So suppose it becomes known that $Y$ belongs to $B$, for some set $B$. Not because of some preliminary random experiment but rather due to it becoming aware to the decision maker that actually $B$ is the set of possible values that $Y$ can take.
So the information is non--stochastic.
    The most natural choice is
    $$         h_I(y)=
    \left\{\begin{array}{ll} 0 & y\in B \\ \\
                                    +\infty & y\notin B\end{array}\right.$$
    from which it is easy to deduce that the $\lambda$ minimising
    $\int h_I(y)\,\lambda(\diff y)+D(\lambda,P)$
    is given by
    $$P_I(A) = \int_{A\cap B} P(\diff y)/P(B).$$

This example is relevant to a number of so-called paradoxes
whereby it becomes apparent to the decision maker that the outcome space
is smaller than the support of $P$ (e.g. Freund's paradox of the two aces). 
How this is learnt is crucial.
This has been pointed out by \cite{Hutchison99, Hutchison08}.
If the information that $Y$ belongs to $B$ is based on some preliminary random experiment, for which
a probability model is given, then obviously the unconditional distribution of $Y$ can be updated
resorting to the current definition of conditional probability.
If not, there is not currently a rigorous justification for the usage of the conditional probability.
The present paper provides a formal and broad enough framework  to cover this case.
Many philosophers of science, that are mentioned in the Introduction, 
have discovered paradoxes based on such scenarios.

\end{example}

\begin{example}
To conclude, let us consider a simple and very concrete example.
Consider a horse race, in which six horses participate.
In order to decide how to bet, one assesses the probability for each horse to win.
Denote by $p_j$ the probability that the horse number $j$ wins, for $j\in\{1,\dotsc,6\}$.
In this example, $Y$ is the number corresponding to the horse that will win.

Before the race begins, it starts raining.
Since conditions have changed, the probabilities need to be updated.
It is problematic to pursue this aim by resorting to the current definition of conditional probability.
In fact, this requires to know the probability that it rains and that the horse number $j$ wins.
As an alternative, one could calculate the conditional probabilities of victory for each horse by applying
Bayes' theorem, which requires the probability that it rains given the victory of horse $j$. But it is raining and the race is not yet run!

It is therefore appropriate to resort to the definition of a conditional probability distribution given in this paper. To this aim, one can assess a score to evaluate the disadvantage due to the rain for each horse. Denote by $h_j$ the score referred to the horse number $j$.
If the ability of the horse $j$ is unaffected by the rain, then $h_j=0$.
If not, $h_j$ is positive. A higher score will be given to those horses whose ability to run is more affected. In this way, one can set 
$$ h_I(y)
= \tsum_{j=1}^6 h_j\ind_{\{j\}}(y),$$ where $I$ is the information that it's raining and $I_0$ is the initial information about the horses and the weather.
The updated probability that the $j$-th horse wins turns out to be
\[P_I(\{j\})= 
\frac{e^{-h_{j}}  p_j}
{\tsum_{i=1}^6 e^{-h_{i}}  p_i},\]
for $j=1,\dotsc, 6$.
\end{example}

\section{Discussion}\label{s: discussion}
We have established a framework in which we can update probabilities in the light of general, i.e. non--stochastic, information. 
Given that we cannot connect the information and the outcome of interest via a probability model, we do so through a loss function. 
Minimizing a cumulative loss function involving the information on one side and the  probability distribution on the other, yields the updated probability distribution. 
When the information is stochastic, 
we employ the self information loss function; the solution then reverts to the standard definition of conditional probability. 


\section*{Acknowledgements}
This work was partially supported by ESF and Regione Lombardia, Italy (by the grant ``Dote Ricercatori'').

\bibliographystyle{agsm}
\bibliography{Bibliography}

\end{document}